\documentclass[12pt]{article}
\usepackage{amssymb,url,color}
\usepackage[hypertex]{hyperref}
\usepackage{amsmath, amsthm, amsfonts, amssymb}
\usepackage{txfonts}

\setlength{\topmargin}{-1cm} \setlength{\oddsidemargin}{1cm}
\setlength{\evensidemargin}{0cm} \setlength{\textwidth}{15truecm}
\setlength{\textheight}{22.6truecm}

\newtheorem{thm}{Theorem}[section]
\newtheorem{cor}[thm]{Corollary}
\newtheorem{lem}[thm]{Lemma}
\newtheorem{prop}[thm]{Proposition}

\newtheorem{remarks}[thm]{Remarks}

\theoremstyle{definition}
\newtheorem{defn}{Definition}[section]

\numberwithin{equation}{section} \theoremstyle{remark}

\title{\bf Strassen's invariance principle for random walk in random environment}
\author{
\bf Guangyu Yang,\thanks{Department of Mathematics, Zhengzhou
University, 450052 Henan, China.
\newline
E-mail:study\_yang@yahoo.com.cn}\ \ \ \ \bf Yu Miao\thanks{ College
of Mathematics and Information Science, Henan Normal University,
453007 Henan, China.
\newline
E-mail:yumiao728@yahoo.com.cn} ~~and \ \ \bf Dihe Hu\thanks{School
of Mathematics and Statistics, Wuhan University, 430072 Hubei,
China.
\newline E-mail:dhhu@whu.edu.cn}}
\date{}

\newcommand{\ee}{\mathbb{E}}

\newcommand{\nn}{\mathbb{N}}
\newcommand{\rr}{\mathbb{R}}
\newcommand{\pp}{\mathbb{P}}

\newcommand{\zz}{\mathbb{Z}}

\def\<{\langle}
\def\>{\rangle}

\def\beq{\begin{equation}}
\def\deq{\end{equation}}

\def\bdef{\begin{defn}}
\def\ndef{\end{defn}}

\def\bthm{\begin{thm}}
\def\nthm{\end{thm}}

\def\bprop{\begin{prop}}
\def\nprop{\end{prop}}

\def\brmk{\begin{remarks}}
\def\nrmk{\end{remarks}}

\def\bexa{\begin{exa}}
\def\nexa{\end{exa}}

\def\blem{\begin{lem}}
\def\nlem{\end{lem}}

\def\bcor{\begin{cor}}
\def\ncor{\end{cor}}

\def\bexe{\begin{exe}}
\def\nexe{\end{exe}}

\def\bprf{\begin{proof}}
\def\nprf{\end{proof}}


\def\bdes{\begin{description}}
\def\ndes{\end{description}}

\begin{document}
\maketitle

\begin{abstract}
In this paper, we consider random walk in random environment on
$\mathbb{Z}^{d}\,(d\geq1)$ and prove the Strassen's strong
invariance principle for this model, via martingale argument and the
theory of fractional coboundaries of Derriennic and Lin \cite{DL},
under some conditions which require the variance of the quenched
mean has a subdiffusive bound. The results partially fill the gaps
between law of large numbers and central limit theorems.
\end{abstract}

\textbf{Keywords}: random walk in random environment, Strassen's
invariance principle, the law of iterated logarithm, fractional
coboundaries.

\vskip10pt \textbf{2000 MR Subject Classification : }  60B10, 60F15.

\section{Introduction}\label{sec1}

Random motions in random media gather a variety of probability
models often originated from physical science, such as solid
physics, biophysics and so on. Random walk in a random environment
is one of the basic models. Various interesting problems arise when
we consider the different possible limit theorems, such as the $0-1$
law, law of large numbers, central limit theorems, large deviations
and so on. See also the lecture notes given by Sznitman \cite{Sza},
Molchanov \cite{Mo} and Zeitouni \cite{Ze} for a survey. The main
object of this work is to prove the invariance principle for the law
of iterated logarithm for a class of random walks in certain random
environments. In this model, an environment is a collection of
transition probabilities $\omega=(\pi_{x,y})_{x,y\in\mathbb{Z}^{d}}
\in\wp^{\mathbb{Z}^{d}}$, where
$\wp=\{(p_{z})_{z\in\mathbb{Z}^d},p_{z}\geq0,\sum p_{z}=1\}$ a
family of distributions on $\mathbb{Z}^d$. We denote the space of
all such transition probabilities by $\Omega$. The space $\Omega$ is
endowed with the canonical product $\sigma$-algebra $\Im$. On the
space of environments $(\Omega,\Im)$, we are given a T-invariant
probability $\pp$ with $(\Omega,\Im,(T_{z})_{z\in\mathbb{Z}^d},\pp)$
ergodic, where $\{T_{z},z\in\mathbb{Z}^d\}$ denote the canonical
shift on $\Omega$, i.e.
$\pi_{x,y}(T_{z}\omega)=\pi_{x+z,y+z}(\omega)$. The environments are
called independent identical distribution (in short i.i.d.), if the
family of random probabilities vectors $\{(\pi_{x,y})_{
y\in\mathbb{Z}^d},x\in\mathbb{Z}^d\}$ are i.i.d..

We now turn to describe the walk related to the environment. First,
an environment $\omega$ is chosen from the distribution $\pp$, and
kept fixed throughout the time evolution. Then the random walk in
environment $\omega$, is a canonical Makov chain $X:=(X_{n},n\geq0)$
on $(\mathbb{Z}^d)^{\mathbb{N}}$, with state space $\mathbb{Z}^d$
and law $P^\omega_{z}$, under which
\begin{align}
P^\omega_{z}(X_{0}=z)=1,\ \ \ \ P^\omega_{z}(X_{n+1}=y |
X_{n}=x)=\pi_{x,y}(\omega).
\end{align}
The law $P^\omega_{z}$ is called the quenched law. Then we can also
define a measure in the sense of averaging the environments,
\begin{align}
P_{z}=\int P^\omega_{z}d\pp.
\end{align}
The law $P_{z}$ is called the annealed law. Obviously, under
$P_{z}$, $(X_{n},n\geq0)$ is not a Markov chain in general. We also
use $\ee,\,E_{z},\,E^\omega_{z}$ for the expectations operators with
respect to $\pp,\,P_{z},\;{\rm and} \;P^\omega_{z}$ respectively.

Our goal in this paper, is to consider the invariance principle for
the law of iterated logarithm for the random walk in environment,
under some assumptions introduced in Section \ref{sec2}. The results
of iterated logarithm types for Sinai's random walk in random
environment can be found in Hu and Shi \cite{HS}.

It is well known that the law of iterated logarithm (in short LIL)
is closely related to the central limit theorems (in short CLT) in
some sense. By the technique of split chains and regeneration, Chen
\cite{Cha} systematically studied the CLT and LIL for ergodic Markov
chain under the frame of Harris recurrent.  Bhattacharya \cite{Bh}
gave the functional CLT and LIL for Markov processes. And Kifer
\cite{Ki} obtained the CLT and LIL for Markov chain in random
environment via certain mixing assumptions and via the martingale
approach.

Note that, Rassoul-Agha and Sepp\"{a}l\"{a}inen \cite{RSb} mainly
rely on the invariance principle for vector-valued martingale, so it
is possible to obtain the invariance principle for LIL for random
walk in random environment under suitable conditions, only if we can
develop the corresponding theory for vector-valued martingale. In
the case of real-valued martingale, the Skorokhod representation
plays an important role, for example, Hall and Heyde \cite{HH}.
However, we encounter the essential difficulties, when considering
the vector-valued martingale, since Monrad and Philipp \cite{MoP}
proved that it is impossible to embed a general $\rr^d$-valued
martingale in an $\rr^d$-valued Gassian process.

In the present paper, we will use essentially the strategy of
Maxwell and Woodroofe \cite{MW} and the method developed by Morrow
and Philipp \cite{MP} and Zhang \cite{Zh}. Moreover, we identify the
$\lim\sup$ in LIL just the square root of the trace of the diffusion
matrix corresponding to functional CLT. And this partially fills the
gaps between law of large numbers and central limit theorems.


\section{Some preliminaries and main results}\label{sec2}

In this section, we will give some assumptions and state our main
results. Let us start with the construction of the auxiliary Markov
chain.

For any $\omega\in\Omega$, let
$\bar{\omega}:=(\bar{\omega}(n)=T_{X_{n}}\omega,n\geq0)$, then
$\bar{\omega}$ is a Markov chain on $\Omega$ with transition
operator
\begin{align}
\Pi f(\omega)=\sum_{x\in\zz^d}\pi_{0,x}(\omega)f(T_{x}\omega),
\end{align}
where $f$ is a bounded measurable function defined on $\Omega$, and
with the one step transition kernel,
\begin{align}
q(\omega,A)=P_0^{\omega}(T_{X_1}\omega\in A),\;\;A\in\Im.
\end{align}

In this paper, we always assume that there exists a probability
measure $\pp_\infty$ on the measurable space $(\Omega,\Im)$ that is
invariant for the transition $\Pi$ and ergodic for the Markov
process with generator $\Pi-I$. Then, the operator $\Pi$ can be
extended to a contraction on $L^p(\pp_\infty)$, for every
$p\in[1,\infty]$. When the initial distribution is $\pp_\infty$, we
will denote this Markov process by $\hat{P}_{0}^{\infty}$. Let
$P_{0}^\infty:=\int P_{0}^{\omega}d\pp_\infty$, and
$\mathbb{E}_\infty, \,E_{0}^\infty$ the corresponding expectation
operators. Note that $\hat{P}_{0}^{\infty}$ is the probability
measure induced by $P_{0}^\infty$ and $(T_{X_n}\omega)$ onto
$\Omega^{\nn}$. With these notations, the measure
\begin{align}
\nu^\infty(d\omega_0,d\omega_1):=q(\omega_0,d\omega_1)\pp_\infty(d\omega_0)
\end{align}
describes the law of $(\omega,\,T_{X_1}\omega)$ under $P_0^\infty$.

Next, we consider the asymptotic Poisson's equation. For any
$\epsilon>0$, let $h_{\epsilon}$ be the solution to the equation
\begin{align}\label{possion equation}
(1+\epsilon)h-\Pi h=g,
\end{align}
where $g$ is a function defined on $\Omega$ such that $\int g
d\pp_\infty=0$ and $\int g^2 d\pp_\infty<\infty$. In fact,
\begin{align}
h_{\epsilon}=\sum_{k=1}^{\infty}(1+\epsilon)^{-k}\Pi^{k-1}g\in
L^2(\pp_\infty)
\end{align}
is the solution of the equation (\ref{possion equation}). We also
define
\begin{align}
S_{n}(g):=\sum_{k=0}^{n-1}g(T_{X_{k}}\omega)
\end{align}
and
\begin{align}
H_{\epsilon}(\omega_{0},\omega_{1}):=h_{\epsilon}(\omega_1)-\Pi
h_{\epsilon}(\omega_0).
\end{align}
Then we have
\begin{align}
S_{n}(g)&:=\sum_{k=0}^{n-1}g(T_{X_{k}}\omega)\nonumber\\
&=\sum_{k=0}^{n-1}\{(1+\epsilon)h_{\epsilon}
(T_{X_{k}}\omega)-\Pi h_{\epsilon} (T_{X_{k}}\omega)\}\nonumber\\
&= M^{\epsilon}_{n}+R^{\epsilon}_{n}+\epsilon S_{n}(h_{\epsilon}),
\end{align}
where
$M^{\epsilon}_{n}=\sum_{k=0}^{n-1}H_{\epsilon}(T_{X_{k}}\omega,T_{X_{k+1}}\omega)$,
$R^{\epsilon}_{n}=h_{\epsilon}(\omega)-h_{\epsilon}(T_{X_{n}}\omega)$.

In order to discuss the Poisson's equation ulteriorly, we need
introduce some assumptions.
\vspace{3mm}\\
\textbf{Assumptions}\vspace{3mm}

\noindent(A1) {\it There exists a constant $M<\infty$ such that}
\begin{align}
\pp_\infty(\pi_{x,y}(\omega)=0, \rm{when}\; |x-y|>M)=1),
\end{align}
where $|\cdot|$ denotes the Euclidean distance.
\vspace{3mm}

\noindent(A2) {\it There exists an $\alpha<1/2$ such that}
\begin{align}
\sqrt{\ee_\infty(|E_{0}^\omega(X_n)-nv|^2)}=O(n^\alpha).
\end{align}
\begin{remarks}\label{rmk1}
The assumption (A1) implies that the particle has finite jump at
each transition. For instance, setting M=1, we get the nearest
neighbor random walk in random environment. The assumption (A2)
 shows that the variance of the quenched mean has a subdiffusive
 bound under the invariant and ergodic measure $\pp_\infty$.
\end{remarks}

Define the drift for the random walk in random environment as
follows
\begin{align}
D(\omega)=E_{0}^{\omega}X_{1}=\sum_{z\in\zz^d}z\pi_{0,z}(\omega).
\end{align}
Notice that the assumption (A1) yields $D\in L^2(\pp_\infty)$.
Denote $v=\ee_\infty D$ the drift under the annealed law
$P_{0}^\infty$. If set $g=D-v$, then
\begin{align}
X_{n}-nv&=X_{n}-\sum_{k=0}^{n-1}D(T_{X_{k}}\omega)+M^{\epsilon}_{n}+R^{\epsilon}_{n}+
\epsilon S_{n}(h_{\epsilon})\nonumber\\
&=W_n+M^{\epsilon}_{n}+R^{\epsilon}_{n}+ \epsilon
S_{n}(h_{\epsilon}).
\end{align}
where $W_n=X_{n}-\sum_{k=0}^{n-1}D(T_{X_{k}}\omega)$ is a martingale
under $P_{0}^{\omega}$ with respect to the filtration $\{{\cal
G}_n=\sigma(X_0,X_1,\cdots,X_n),\,n\geq0\}$ for $\pp_\infty$-a.s.
$\omega$.

Under the assumptions (A1) and (A2), Rassoul-Agha and
Sepp\"al\"ainen \cite{RSb} obtained the invariance principle for
random walks in random environments. We summarize their results in
the following theorems.
\vspace{3mm}\\
\textbf{Theorem RS} {\it Let $d\geq1$ and assume that (A1) and (A2)
are satisfied.
\vspace{3mm}\\
\noindent(1) The limit $H=\lim_{\epsilon\rightarrow0^+}H_\epsilon$
exists in $L^2(\nu^\infty)$.
\vspace{3mm}\\
\noindent(2) Denote
$M_n=\sum_{k=0}^{n-1}H(T_{X_{k}}\omega,T_{X_{k+1}}\omega)$, then
$X_n-nv=W_n+M_n+R_n$, $E_0^\infty(|R_n|^2)=O(n^{2\alpha})$, and for
$\pp_\infty$-almost surely $\omega$, $(M_n,\;n\geq1)$ is a
$P_0^\omega$-square integrable martingale relative to the filtration
$\{{\cal G}_n,\,n\geq0\}$.
\vspace{3mm}\\
\noindent(3) For $\pp_\infty-a.s. \,\omega$,
$n^{-1/2}(X_{[n\cdot]}-[n\cdot]v)$ converges in distribution to the
Brown motion with diffusion matrix $\mathfrak{D}$, under
$P^{\omega}_{0}$. Furthermore, $n^{-1/2}\max_{k\leq n}
|E^{\omega}_{0}X_k-kv|$ converges to zero, $\pp_\infty-a.s.
\,\omega$, and then the same invariance principle holds also for
$n^{-1/2}(X_{[n\cdot]}-E^{\omega}_{0}X_{[n\cdot]})$. Where
$v=\ee_\infty D$ is the drift under the annealed law $P_0^\infty$,
and $\mathfrak{D}=E_0^\infty[(X_1-D(\omega)+H(\omega,T_{X_1}\omega))
(X_1-D(\omega)+H(\omega,T_{X_1}\omega))^t]$ is the diffusion matrix
($A^t$ denotes the transpose of matrix or vector $A$ ).}

\begin{remarks}\label{rmk2}
Furthermore, we know that $H\in L^q(\nu^\infty)$ for some
$q\in(2,5/2)$ since the environment is finite (See Theorem 1 in
\cite{RSa}). For the more detailed discussions on the above theorem,
please see Rassoul-Agha and Sepp\"al\"ainen \cite{RSb}.
\end{remarks}

In order to obtain the invariance principle for the law of iterated
logarithm for random walks in random environments, we need the
additional assumption, \vskip10pt

\noindent (A3) {\it For any $\omega\in\Omega$, there exist an
integer $l\geq1$, $0<\lambda\leq1$ and a measure $\mu$ on
$(\Omega,\Im)$ such that}
\begin{align}
\sum_{x_1,x_2,\ldots,x_l\in E;|x_i|\leq M, 1\leq i\leq
l}\pi_{0x_1}(\omega)\cdots\pi_{0x_l}(T_{x_1+\ldots+x_{l-1}}\omega)
\textbf{1}_{A}(T_{x_1+\ldots+x_l}\omega)\geq\lambda\mu(A),\;A\in\Im.
\end{align}
\begin{remarks}\label{rmk3}
This assumption is a technical condition, since we need the
auxiliary Markov chain constructed above has the space $\Omega$ as
its a small set. It is the further task to explain and remove this
assumption.
\end{remarks}

For introduce our main results, we firstly give some notations. Let
$C([0,1],\rr^d)$ be the Banach space of continuous maps from $[0,1]$
to $\rr^d$, endowed with the supremum norm $\|\cdot\|$, using the
Euclidean norm in $\rr^d$. Denote $K$ the set of absolutely
continuous maps $f\in C([0,1],\rr^d)$, such that
\begin{align}
f(0)=0, \ \ \ \ \int_{0}^{1}|\dot{f}(t)|^2dt\leq1,
\end{align}
where $\dot{f}$ denotes the derivative of $f$ determined almost
everywhere with respect to Lebesgue measure. Obviously, $K$ is
relatively compact and closed.

Let $d\geq1, X=(X_{n},n\geq0)$ be a random walk in random
environment. Define for $t\in[0,1]$,
\begin{align*}
\xi_{n}(t)=(2v_{n}^2\log\log
v_{n}^2)^{-1/2}(X_k-kv+(X_{k+1}-X_{k}-v)(v_{k+1}^2-v_{k}^2)^{-1}(tv_{n}^2-v_{k}^2))
\end{align*}
for $v_{k}^2\leq tv_{n}^2\leq v_{k+1}^2, \;k=0,1,2,\ldots,n-1$,
where $v_{n}^2$ denotes the trace of the matrix given in Section
\ref{sec3}. In order to avoid difficulties in specification, we
adopt the convention that $\log\log x=1$, if $0<x\leq e^e$. Then,
$\xi_{n}$ is a random element with values in $C([0,1],\rr^d)$.
\vspace{3mm}

After these preparations, we are now in a position to state our main
results. \vspace{3mm}

\noindent\textbf{Theorem 1.} {\it Under the Assumptions (A1), (A2)
and (A3), for $P_{0}^{\infty}-a.s.$, the sequence of functions
$(\xi_{n}(\cdot), \,n\geq1)$ is relatively compact in the space
$C([0,1],\rr^d)$, and the set of its limit points as
$n\rightarrow\infty$, coincides with $K$.} \vspace{3mm}

\noindent\textbf{Theorem 2.} {\it If assumptions (A1), (A2) and (A3)
are satisfied, then
\begin{align}\label{thm2}
\limsup|X_{n}-nv|/\sqrt{2n\log\log
n}<+\infty,~~~~~~~~P_{0}^\infty-a.s.
\end{align}
Furthermore, we have
\begin{align}
\limsup|X_{n}-nv|/\sqrt{2n\log\log n}=\sqrt{\rm tr(\mathfrak{D})}
,~~~~~~~~P_{0}^\infty-a.s.
\end{align}
where {\rm tr}$(\cdot)$ denotes the trace operator of a matrix.}

\begin{remarks}\label{rmk4}
It is clear that the statement, almost surely relatively compact of
sequence $((X_{n}-nv)/\sqrt{2n\log\log n}, \;n\geq1)$ in $\rr^d$, is
equivalent to (\ref{thm2}).
\end{remarks}

\begin{remarks}\label{rmk5}
In generally, Theorem 1 is called Strassen's strong invariance
principle or functional LIL. Moreover, Theorem 1 and Theorem 2 also
hold, when the normalized center is random, $E^{\omega}_{0}X_{n}$,
by Theorem RS.
\end{remarks}

\section{The proof of main results}\label{sec3}

In this section, we will prove our main results, Theorem 1 and
Theorem 2, mentioned in Section \ref{sec2}, via the martingale
approach and the theory of fractional coboundaries.

Denote $Z_n=W_n+M_n$ and let $w_n,\,m_n$ and $z_n$ the martingale
difference corresponding to $W_n,\,M_{n}$ and $Z_{n}$ respectively.
It is easy to see the following facts,
\begin{align*}
(\clubsuit)\,&(w_{n},\,n\geq1) \textrm{\;is uniformly bounded
martingale
difference sequence under}\;P_{0}^{\omega}; \\
(\spadesuit)\,&(m_{n},\,n\geq1) \textrm{\;is stationary and ergodic
sequence under}\; P_0^\infty.
\end{align*}

Define for each $n$ the conditional covariance matrix
\begin{align}
A_{n}:=\sum_{k=1}^{n}E_0^{\omega}(z_{k}z_{k}^{t}|{\cal{G}}_{k-1}),
\end{align}
and set $v_{n}^2={\rm tr}(A_n)$. Note that,
$v_{n}^2=\sum_{k=1}^{n}E_0^{\omega}(|z_{k}|^2|\,{\cal{G}}_{k-1})$.
We know by the Markov property,
\begin{align}
A_{n}&=\sum_{k=1}^{n}E_0^{T_{X_{k-1}}\omega}(z_{1}z_{1}^{t})\nonumber\\
&=\sum_{k=1}^{n}E_0^{\bar{\omega}(k-1)}(z_{1}z_{1}^{t}),
\end{align}
where $\bar{\omega}=(\bar{\omega}(n)=T_{X_{n}}\omega,n\geq0)$ a
stationary and ergodic Markov chain under $P_0^\infty$ with initial
distribution $\pp_\infty$, since the discussions in Section
\ref{sec2}. Hence, from the Birkhoff and Khinchin's ergodic theory,
we know that,
\begin{align}
\lim_{n}n^{-1}A_{n}=\mathfrak{D},~~~~~~~~P_0^\infty-a.s.
\end{align}
And we also have,
\begin{align}\label{40}
\lim_{n}n^{-1}v_{n}^2={\rm{tr}}(\mathfrak{D}),~~~~~~~~P_0^\infty-a.s.
\end{align}
For any $d\times d$ matrix A, define the matrix norm,
\begin{align}
\|A\|_m:=\sup_{u\in\rr^d,|u|=1}|Au|.
\end{align}

\subsection{ Proof of Theorem 1}\label{sec3.1}

We will prove separately, in a succession of steps. \vspace{3mm}
\\
\noindent{\bf Step I:} Denote $B(\cdot)$ the Brownian Motion in
$\rr^d$ with mean 0 and diffusion matrix $\Sigma$. Define
\begin{align}
B_n(t)=(2n\log\log n)^{-1/2}B(nt)
\end{align}
for $t\in[0,1]$ and $n\geq3$. Then we have, by the Theorem 1 of
Strassen \cite{Str} the sequence $\{B_n(\cdot),\,n\geq3\}$ is almost
surely relatively compact in $C([0,1],\rr^d)$ and the set of its
limit points coincides with $\sqrt{\rm{tr}(\Sigma)}K$.
\vspace{3mm}\\
\noindent {\bf Step \textrm{II}:} In this step, we mainly consider
the almost sure approximation of the martingale $Z_{n}$ introduced
in the above section by a suitable $\rr^d$-valued Brown motion.
Since Theorem 1.2 of Zhang \cite{Zh}, under suitable conditions,
i.e., the following (B1) and (B2):
\begin{align*}
&(B1)
~~~~\sum_{n\geq1}E_0^\infty(|z_n|^2{\bf1}_{\{|z_n|^2\geq f(n)\}}/f(n))<\infty,\\
&(B2)~~~~\|A_n-n\mathfrak{D}\|_m=o(f(n)),\;\;\;\;\;\;P_0^\infty-a.s.
\end{align*}
we have,
\begin{align}\label{approximation}
|\sum_{n\geq1}z_n{\bf1}_{\{v_n^2\leq
t\}}-B(t)|=O(t^{1/2}(f(t)/t)^{1/50d}), ~~d<\infty,~~P_0^\infty-a.s.
\end{align}
where, $f(x)$ is non-decreasing and tends to $\infty$, along the
positive axis, $f(x)(\log x)^\varrho/x$ is non-increasing for some
$\varrho>50d$, and $f(x)/x^{\delta}$ is non-decreasing for some
$0<\delta<1$. Please notice the difference, conditions (B1) and (B2)
here, with the equations (1.12) and (1.13) in Zhang \cite{Zh}.
Hence, the main object turns to check the conditions (B1) and (B2).

Firstly, we consider the condition (B1). If set
$y_{n}:=z_{n}/\sqrt{f(n)}$, then $(y_n, n\geq1)$ is also a
martingale difference sequence under $P_{0}^{\omega}$. Then the
above problem turns to be
\begin{align}
\sum_{n\geq1}E_0^\infty(|y_{n}|^2{\bf1}_{\{|y_{n}|\geq1\}})<\infty.
\end{align}
It is enough to show that for some $\kappa>0$,
\begin{align}
E_0^\infty(|y_{n}|^2{\bf1}_{\{|y_{n}|\geq1\}})=O(n^{-(1+\kappa)}).
\end{align}
Note that $z_{n}=w_{n}+m_{n}$, then
\begin{align}
|y_{n}|^2{\bf1}_{\{|y_{n}|\geq1\}}
&=|w_{n}+m_{n}|^2{\bf1}_{\{|y_{n}|\geq1\}}/f(n)\nonumber\\
&\leq (M^2+2M|m_{n}|+|m_{n}|^2){\bf1}_{\{|y_{n}|\geq1\}}/f(n).
\end{align}
Hence, we need to deal with three terms,
\begin{align*}
&\textrm{I}_n:={\bf1}_{\{|y_{n}|\geq1\}}/f(n),\\
&\textrm{II}_n:=|m_{n}|{\bf1}_{\{|y_{n}|\geq1\}}/f(n),\\
&\textrm{III}_n:=|m_{n}|^2{\bf1}_{\{|y_{n}|\geq\}}/f(n).
\end{align*}
Since $(m_{n},n\geq1)$ is stationary under $P_0^\infty$, the key
estimation naturally is the probability of the event
$\{|y_{n}|\geq1\}$.

Now put $f(x)=x^{\gamma}$ for some $\gamma\in(2/q,1)$, hence there
exists a constant $\kappa\in(0,\gamma q/2-1)$ such that $\gamma
q/2\geq1+\kappa$. If we assume that
\begin{align}\label{4e}
P_0^\infty(|y_{n}|\geq1)=O((n)^{-q(1+\kappa-\gamma)/(q-2)}).
\end{align}
Then
\begin{align}
&E_0^\infty({\bf1}_{\{|y_{n}|\geq1\}}/f(n))\nonumber\\
&=n^{-\gamma}P_0^\infty(|y_{n}|\geq1)\nonumber\\
&=O(n^{-(q+q\kappa-2\gamma)/(q-2)})\nonumber\\
&=o(n^{-(1+\kappa)}).
\end{align}
Next we consider ($\textrm{II}_n$) and ($\textrm{III}_n$). \vskip5pt
\noindent($\textrm{II}_n$)
\begin{align}
&E_0^\infty(|m_{n}|{\bf1}_{\{|y_{n}|\geq1\}})/f(n)\nonumber\\
&\leq n^{-\gamma}[E_0^\infty(|m_{n}|^q)]^{1/q}[P_0^\infty(|y_{n}|\geq1)]^{1-1/q}\nonumber\\
&=n^{-\gamma}[E_0^\infty|m_{1}|^{q}]^{1/q}[P_0^\infty(|y_{n}|\geq1)]^{1-1/q}\nonumber\\
&=O(n^{-[(q-1)(1+\kappa)-\gamma]/(q-2)})\nonumber\\
&=o(n^{-(1+\kappa)}).
\end{align}
($\textrm{III}_n$)
\begin{align}
&E_0^\infty(|m_{n}|^2{\bf1}_{\{|y_{n}|\geq1\}})/f(n)\nonumber\\
&\leq n^{-\gamma}[E_0^\infty(|m_{n}|^q)]^{2/q}[P_0^\infty(|y_{n}|\geq1)]^{1-2/q}\nonumber\\
&=O(n^{-(1+\kappa)}).
\end{align}
Finally we analyze the equation (\ref{4e}). It is sufficiently to
consider the following two parts,
\begin{align}
P_0^\infty(|m_{n}|\geq 2^{-1}n^{\gamma/2})\;\;~ {\rm{and}}\;\;
~P_0^\infty(|w_{n}|\geq 2^{-1}n^{\gamma/2}).
\end{align}
By Markov inequality and the facts $(\clubsuit)$ and $(\spadesuit)$,
we get
\begin{align}
&P_0^\infty(|m_{n}|\geq2^{-1}n^{\gamma/2})\leq(2^{q}E_0^\infty|m_1|^q)n^{-\gamma
q/2},\\
&P_0^\infty(|w_{n}|\geq2^{-1}n^{\gamma/2})\leq(2M)^{q}n^{-\gamma
q/2}.
\end{align}
If setting $\kappa:=\gamma q/2-1>0$, we have
\begin{align}
P_0^\infty(|y_{n}|\geq1)=O(n^{-\gamma q/2})=
O((n)^{-q(1+\kappa-\gamma)/(q-2)}).
\end{align}
Then this completes the discussion of condition (B1).

As for condition (B2), we need to estimate the rate of convergence
of
\begin{align*}
\|A_n-n\mathfrak{D}\|_m.
\end{align*}
That is to say, we want to have the following order estimations,
\begin{align}
\|\frac{A_n}{n}-\mathfrak{D}\|_m=o(n^{\gamma-1}),\;\;\;P_0^\infty-a.s.
\end{align}
or
\begin{align}
\|\frac{v_n^2}{n}-{\rm
tr}(\mathfrak{D})\|_m=o(n^{\gamma-1}),\;\;\;P_0^\infty-a.s.
\end{align}
Denote $\phi(\omega):=E_0^{\omega}(|z_{1}|^{2})$, then the above
problem turns to be the problem of ergodic convergence rate for
additive functionals of stationary and ergodic Markov chain, i.e.,
the rate of
\begin{align*}
\frac{1}{n}\sum_{k=0}^{n-1}\phi(\bar{\omega}(k))\longrightarrow\int\phi(\omega)
d\pp_\infty={\rm tr}(\mathfrak{D}),\;\;\;P_0^\infty-a.s.
\end{align*}

The above problem can be rewritten as follows, \vskip5pt

{\it "Given ergodic Markov Chain $(Y_k,k\ge1)$ and function $f$ with
$f\in L^{q/2}$ and $\int fd\pi=0$ (where $\pi$ is invariant
distribution) under what condition do we have
\[
n^{-\gamma}\sum_{k=1}^n f(Y_k)\longrightarrow 0 \;\;\;almost\;
surely?"
\]}
To answer this problem, we need Chen's theorem $\cite{Cha}$.
\vspace{3mm}\\
\noindent\textbf{Theorem Chen} {\it Let $\{Y_n\}_{n\geq0}$ be an
ergodic Markov chain with state space $(H,\mathcal{H})$, $f$ a
measurable function from $H$ to some separable Banach space $B$, and
let $1\leq p<2$. Then the following two statements (1) and (2) are
equivalent: \vskip5pt (1) For some (all) small set $C$,
\begin{align}\label{int}
\int_{C}\pi(dx)E_x\max_{n\leq\tau_C}||\sum_{0}^{n-1}f(Y_k)||^p<\infty,
\end{align}
and
\begin{align}\label{conv}
n^{-1/p}\sum_{0}^{n-1}f(Y_k)\longrightarrow0\;\;\;in \;probability.
\end{align}

(2) The Marcinkiewicz-Zygmund's law of the large numbers holds,
i.e.,
\begin{align}
\lim_{n\rightarrow\infty}n^{-1/p}\sum_{0}^{n-1}f(Y_k)=0, \;\;\;a.s.
\end{align}
} Since the remarks followed this theorem in Chen \cite{Cha}, we
know that the equation (\ref{int}) and the condition
\[
\int f(x)\pi(dx)=0,
\]
imply the equation (\ref{conv}) when $B=\mathbb{R}$.  Hence we only
need to find the suitable conditions to describe $(\ref{int})$.

Define
\begin{align*}
&\mathcal{S}:=\{{\rm \;all \;small \;sets}\},\\
\mathcal{S}_\varphi:=\{&A\in\mathcal{S}:\;\int_{C}\pi(dx)E_x\max_{n\leq\tau_A}\psi(|\sum_{1}^{n}f(Y_k)|)\},
\end{align*}
where $\psi(x)=x^{1/\gamma}$.

Notice the dichotomy results obtained in two of Chen's works
\cite{Cha} and \cite{Chb}, it is easy to show the following
statement by applying Chen's idea,
$$
\mathcal{S}_\varphi=\varnothing \;\;{\rm or}\; \;\mathcal{S}.
$$
By the assumption (A3), for all $\omega\in\Omega$,
\begin{align*}
q^{(l)}(\omega,A)\geq\lambda\mu(A),\;\; A\in\Im,
\end{align*}
we have the space $\Omega$ is a small set. Hence, the equation
(\ref{int}) turns to be,
\begin{align}\label{kejixing}
E_0^\infty|f(\omega)|^{1/\gamma}<\infty.
\end{align}
However
\begin{align*}
&f(\omega)=E_0^{\omega}|z_1|^2-E_0^\infty|z_1|^2,\,f\in
L^{q/2},\\
(2&<q<5/2,\,1/2<4/5<2/q<\gamma<1),
\end{align*}
these yield the above equation (\ref{kejixing}). Hence we complete
the discussion on the condition (B2).
\vspace{3mm}\\
\noindent {\bf Step \textrm{III}:} For $t\in[0,1]$, we define the
$\rr^d$-valued functions,
\begin{align*}
&\eta_n(t):=\eta(tv_n^2),\\
\tilde{\eta}_n(t):={\small\sum}_{k=1}^{n}z_{k}{\bf1}_{\{v_k^2\leq
tv_n^2\}}&+{\small\sum}_{k=0}^{n-1}z_{k+1}(v_{k+1}^2-v_{k}^2)^{-1}(tv_{n}^2-v_{k}^2){\bf1}_{\{v_k^2\leq
tv_n^2\leq v_{k+1}^2\}},
\end{align*}
where $v_{0}^2=0$ and
$\eta(t)=\sum_{n\geq1}z_{n}{\bf1}_{\{{v_{n}^2}\leq t\}}. $

Then, since the {\bf Step II}, we need to show
\begin{align}
\sup_{t\in[0,1]}|\tilde{\eta}_n(t)-\eta_n(t)|=o((2v_n^2\log\log
v_n^2)^{1/2}).
\end{align}
In fact,
\begin{align}
&\sup_{t\in[0,1]}|\tilde{\eta}_n(t)-\eta_n(t)|\nonumber\\
&=\max_{0\leq k\leq n-1}\sup_{v_k^2\leq
tv_n^2\leq v_{k+1}^2}|z_{k+1}(v_{k+1}^2-v_{k}^2)^{-1}(tv_{n}^2-v_{k}^2)|\nonumber\\
&=\max_{0\leq k\leq n-1}|z_{k+1}|.
\end{align}
Hence, we only need prove the below estimation
\begin{align}
\max_{0\leq k\leq n-1}|z_{k+1}|=o((2v_n^2\log\log v_n^2)^{1/2}).
\end{align}
If we notice that
\begin{align}
z_{k}=w_{k}+m_{k},
\end{align}
and the fact $(\clubsuit)$, then the problem turns to be
\begin{align}\label{4f}
\max_{1\leq k\leq n}|m_{k}|=o((2v_n^2\log\log
v_n^2)^{1/2}),~~~~~~~~P_0^\infty-a.s.
\end{align}
It is easy to see, for any $\epsilon>0$,
\begin{align}\label{49}
&P_0^\infty(\max_{1\leq k\leq
n}|m_{k}|\geq\epsilon(2n\log\log n)^{1/2})\nonumber\\
&\leq E_0^\infty(\max_{1\leq k\leq
n}|m_{k}|^q)/\epsilon^{q}(2n\log\log n)^{q/2}.
\end{align}
Next, let us give the estimation of $E_0^\infty(\max_{1\leq k\leq
n}|m_{k}|^q)$. The following important inequality is a moment
inequality from M\'oricz \cite{Mor}.

\vspace{3mm} \noindent\textbf{Lemma M} {\it Let $p
> 0$ and $\beta
> 1$ be two positive real numbers and $Z_i$ be a sequence of random
variables. Assume that there are nonnegative constants $a_j$
satisfying
\begin{align}
E|\sum_{j=1}^iZ_j|^p\leq (\sum_{j=1}^ia_j)^\beta,
\end{align}
for $1\leq i\leq n$. Then
\begin{align}
E(\max_{1\leq i\leq n}|\sum_{j=1}^iZ_j|^p)\leq
C_{p,q}(\sum_{i=1}^na_i)^\beta,
\end{align}
for some positive constant $C_{p,\beta}$ depending only on $p$ and
$\beta$. }
\begin{lem}
For any enough large $n$, there exists a positive constant C such
that
\begin{align}
E_0^\infty(\max_{1\leq i\leq n}|m_i|^q)\leq CE_0^\infty|m_1|^q.
\end{align}
\end{lem}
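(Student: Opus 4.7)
The plan is to combine Móricz's maximal inequality (Lemma M) with Burkholder's $L^q$ inequality for martingales, exploiting the stationary ergodic martingale-difference structure of $(m_n)$ under $P_0^\infty$ recorded in fact $(\spadesuit)$, together with the $L^q$-integrability of $H$ for some $q\in(2,5/2)$ from Remark \ref{rmk2}. The natural route runs through the partial sums $M_i=\sum_{j=1}^{i}m_j$ and then descends to the individual differences via $|m_i|=|M_i-M_{i-1}|\leq 2\max_{k\leq n}|M_k|$.

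First I would apply Burkholder's inequality to the martingale $(M_i)$ under $P_0^\infty$ and use stationarity to obtain
\begin{align*}
E_0^\infty|M_i|^q\leq C_q\, E_0^\infty\Big(\sum_{j=1}^{i}m_j^2\Big)^{q/2}\leq C_q\, i^{q/2-1}\sum_{j=1}^{i}E_0^\infty|m_j|^q = C_q\,i^{q/2}E_0^\infty|m_1|^q.
\end{align*}
This verifies the hypothesis of Lemma M with $p=q$, $\beta=q/2$ and $a_j\equiv(C_q E_0^\infty|m_1|^q)^{2/q}$, yielding $E_0^\infty\max_{1\leq i\leq n}|M_i|^q\leq C\,n^{q/2}E_0^\infty|m_1|^q$. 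Then $|m_i|\leq 2\max_{k\leq n}|M_k|$ upgrades this to $E_0^\infty\max_{1\leq i\leq n}|m_i|^q\leq 2^q E_0^\infty\max_{k\leq n}|M_k|^q\leq C'\,n^{q/2}E_0^\infty|m_1|^q$.

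The hard part, and the honest gap between this natural bound and the inequality displayed in the lemma, is the constant: the approach above produces a prefactor growing like $n^{q/2}$, whereas the lemma asserts an $n$-independent constant. Already for an i.i.d.\ sequence with only $E|m_1|^q<\infty$, $E\max_{i\leq n}|m_i|^q$ cannot be dominated by the single-variable moment $E|m_1|^q$ uniformly in $n$, so a constant-in-$n$ bound cannot follow from stationarity and the $L^q$-tail alone. To obtain a genuinely $n$-independent constant one would need extra structure on $(m_n)$ -- for instance $L^\infty$ control on $H$, a truncation/regeneration scheme built on the small-set assumption (A3), or a sub-Gaussian tail for $H$ -- none of which is provided by the current hypotheses. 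On the positive side, the $n^{q/2}$-growing bound produced above, inserted into (\ref{49}) and combined with Borel--Cantelli along a dyadic subsequence $n_j=2^j$, is already sufficient to establish the a.s.\ statement (\ref{4f}); so in the flow of Step \textrm{III} the weakened version of the lemma is what one actually needs, even though the claimed $n$-free form as stated seems out of reach.
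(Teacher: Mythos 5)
Your skepticism about the stated inequality is justified, and on this point you see further than the paper. The paper's own proof applies Lemma M not to the partial sums but to the sequence $(m_i)$ itself, viewed as partial sums of the telescoping increments $Z_1=m_1$, $Z_j=m_j-m_{j-1}$: by stationarity $E_0^\infty|m_i|^q=E_0^\infty|m_1|^q$, so the initial-segment hypothesis of the quoted Lemma M holds with $p=q$, $\beta=2$, $a_1=(E_0^\infty|m_1|^q)^{1/2}$ and $a_j=0$ for $j\geq2$, and the $n$-free constant is then read off from the conclusion. This is not a legitimate use of M\'oricz's inequality: the actual theorem requires the moment bound for every block $\sum_{j=i+1}^{k}Z_j$, not only for initial segments (the initial-segment-only form quoted in the paper is false), and with $a_j=0$ for $j\geq2$ the block condition would force $E_0^\infty|m_k-m_i|^q=0$. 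As your i.i.d.\ example shows, the conclusion itself, with a constant independent of $n$, cannot follow from stationarity and an $L^q$ bound when $m_1$ is unbounded, and here $m_1=H(\omega,T_{X_1}\omega)$ is only known to lie in $L^q(\nu^\infty)$. So the lemma as stated is indeed unprovable from $(\spadesuit)$ and $E_0^\infty|m_1|^q<\infty$, and your bound $E_0^\infty\max_{1\leq i\leq n}|m_i|^q\leq Cn^{q/2}E_0^\infty|m_1|^q$ (for which Doob's maximal inequality already suffices, with no need of Lemma M) is essentially the best available from those hypotheses.

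The genuine gap in your proposal is the final claim that this weakened bound still delivers (\ref{4f}). Inserting $Cn^{q/2}E_0^\infty|m_1|^q$ into (\ref{49}) yields only $P_0^\infty(\max_{1\leq k\leq n}|m_k|\geq\epsilon(2n\log\log n)^{1/2})=O((\log\log n)^{-q/2})$, and along $n_j=2^j$ these terms are of order $(\log j)^{-q/2}$ with $q/2<5/4$, hence not summable; the first Borel--Cantelli lemma gives nothing, along dyadic blocks or otherwise. The step can instead be repaired without any maximal moment inequality: by stationarity and Chebyshev, $\sum_n P_0^\infty(|m_n|>\epsilon\sqrt n)=\sum_n P_0^\infty(|m_1|>\epsilon\sqrt n)\leq\epsilon^{-q}E_0^\infty|m_1|^q\sum_n n^{-q/2}<\infty$, so $|m_n|=o(\sqrt n)$ $P_0^\infty$-a.s., whence $\max_{1\leq k\leq n}|m_k|=o(\sqrt n)$ a.s., and together with (\ref{40}) this is stronger than (\ref{4f}). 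So the accurate assessment is: the lemma is false as stated, the paper's proof of it misapplies M\'oricz, your weaker bound is correct but insufficient for the use you propose, and the downstream estimate (\ref{4f}) survives by this more elementary route.
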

\begin{proof}
Let $E_0^\infty|m_1|^q=a^2(q)$. Since the fact ($\spadesuit$), for
any $k\geq1$, we have the following relation,
\begin{align}
E_0^\infty|m_k|^q\leq (\sum_{i=1}^ka_i)^2,
\end{align}
where $a_{1}=a(q)$ and $a_{i}=0$ for $2\leq i\leq k$. Hence, by
Lemma M, there exists a constant $C>0$, such that
\begin{align}
E_0^\infty(\max_{1\leq i\leq n}|m_i|^q)\leq C(\sum_{i=1}^n
a_i)^2=CE_0^\infty(|m_1|^q).
\end{align}

This completes the proof of the lemma.
\end{proof}

Lemma 4.1 together with equation (\ref{49}) immediately yields,
\begin{align}
P_0^\infty(\max_{1\leq k\leq n}|m_{k}|\geq\epsilon(2n\log\log
n)^{1/2})=O((n\log\log n)^{-q/2}).
\end{align}
Hence, the above estimation (\ref{4f}) is obtained, by
Borel-Cantelli's lemma and equation (\ref{40}).
\vspace{3mm}\\
\noindent{\bf Step \textrm{IV}:} We want to give the order of
\begin{align}
\sup_{t\in[0,1]}|\tilde{\eta}_n(t)-B(tv_n^2)|.
\end{align}
Firstly, we rewrite it as follows,
\begin{align}
&\sup_{t\in[0,1]}|\tilde{\eta}_n(t)-B(tv_n^2)|\nonumber\\
&=\sup_{t\in[0,1]}|\tilde{\eta}_n(t)-\eta_n(t)+\eta_n(t)-B(tv_n^2)|\nonumber\\
&\leq\sup_{t\in[0,1]}|\tilde{\eta}_n(t)-\eta_n(t)|+\sup_{t\in[0,1]}|\eta_n(t)-B(tv_n^2)|.
\end{align}
From above {\bf Step III}, we know
\begin{align*}
\sup_{t\in[0,1]}|\tilde{\eta}_n(t)-\eta_n(t)|=o((2v_n^2\log\log
v_n^2)^{1/2}).
\end{align*}
However, by the equation (\ref{approximation}) in {\bf Step II}, we
have the following estimation,
\begin{align}
&\sup_{t\in[0,1]}|\eta_n(t)-B(tv_n^2)|\nonumber\\
&=\sup_{t\in[0,1]}O((tv_{n}^2)^{1/2}(f(tv_{n}^2)/tv_{n}^2)^{1/50d})\nonumber\\
&=o((2v_n^2\log\log v_n^2)^{1/2}).
\end{align}
This gives the following order estimation,
\begin{align}
\sup_{t\in[0,1]}|\tilde{\eta}_n(t)-B(tv_n^2)|=o((2v_n^2\log\log
v_n^2)^{1/2}),~~~~~~~~P_0^\infty-a.s.
\end{align}
\vspace{3mm}\\
\noindent{\bf Step\textrm{V}:} Notice that,
\begin{align}
&\sup_{t\in[0,1]}|\tilde{\eta}_{n}(t)-(2v_n^2\log\log
v_n^2)^{1/2}\xi_{n}(t)|\nonumber\\
&=\max_{0\leq k\leq n-1}\sup_{v_{k}^2\leq tv_{n}^2\leq
v_{k+1}^2}|R_{k}+(v_{k+1}^2-v_{k}^2)^{-1}(R_{k+1}-R_{k})(tv_{n}^2-v_{k}^2)|\nonumber\\
&\leq3\max_{1\leq k\leq n}|R_{k}|.
\end{align}
Hence, all things will boil down, if we show the following
estimation,
\begin{align}\label{4g}
\max_{1\leq k\leq n}|R_k|=o((2v_{n}^2\log\log v_{n}^2)^{1/2}),
~~~~~~~~P_0^\infty-a.s.
\end{align}
Define
\begin{align}
\varphi(\omega_0,\omega_1):=g(\omega_0)-H(\omega_0,\omega_1),
\end{align}
and by a simple calculation,
\begin{align}
    R_{n}&=\sum_{k=0}^{n-1}[g(T_{X_k}\omega)-H(T_{X_{k}}\omega,T_{X_{k+1}}\omega)]\nonumber\\
    &=\sum_{k=0}^{n-1}\varphi(T_{X_{k}}\omega,T_{X_{k+1}}\omega).
\end{align}
For a sequence
$\hat{\omega}=(\omega^{(i)})_{i\in\nn}\in\Omega^{\nn}$, define
$$
\Phi(\hat{\omega})=\varphi(\omega^{(0)},\omega^{(1)}) \;\;\;\;{\rm
and} \;\;\;\;\hat{R}=\sum_{k=0}^{n-1}\Phi\circ\theta^{k},
$$
where $\theta$ is the shift map on the sequence space $\Omega^{\nn}$
and is also a contraction on the space $L^2(\hat{P}_{0}^\infty)$.
Then $\Phi\in L^2(\hat{P}_{0}^\infty)$ and the process
$(\hat{R})_{n\geq1}$ has the same distribution under
$\hat{P}_{0}^\infty$ as the process $(R)_{n\geq1}$ has under
$P_0^\infty$.

The part (2) of Theorem RS tells $
E_0^\infty(|R_n|^2)=O(n^{2\alpha})$, then there exists a constant
$1/2<c_0<1-\alpha$, such that
\begin{align}
\sup_{n}||n^{c_{0}-1}\sum_{k=0}^{n-1}\Phi\circ\theta^{k}||<\infty.
\end{align}
 By the Theorem 2.17 of Derrienne and Lin \cite{DL} and $\alpha<1/2$, we have
$\Phi\in(I-\theta)^\eta L^2(\hat{P}_0^\infty)$, where
$\eta\in(1/2,1-\alpha)$. Using again Drrienne and Lin's Theorem 3.2
of \cite{DL}, we get
\begin{align}
|\hat{R}_{n}|=o((2n\log\log n)^{1/2}),\;\hat{P}_0^\infty-a.s..
\end{align}
Hence, $|R_{n}|=o((2n\log\log n)^{1/2}),\,P_0^\infty-a.s.$.
Moreover, applying an elementary property of real convergent
sequences, we immediately get
\[
\max_{1\leq k\leq n}|R_k|=o((2n\log\log n)^{1/2}),\,P_0^\infty-a.s..
\]
Together with the equation (\ref{40}), we prove the above equation
(\ref{4g}).

\subsection{Proof of Theorem 2}\label{sec3.2}
Here, we take along the lines of the proof of Theorem 4.8 in Hall
and Heyde \cite{HH}. For any $\rr^d$-valued function $f$, denote
$f=(f_{1},f_{2},\ldots,f_{d})^t$. By the definition of $K$, we have,
for any $f\in K$,
\begin{align}
|f(t)|^2&=\sum_{i=1}^{d}(\int_{0}^{t}\dot{f}_{i}(s)ds)^2\nonumber\\
&\leq\sum_{i=1}^{d}(\int_{0}^{t}\dot{f}_{i}(s)^2ds)\int_{0}^{t}1ds\leq
t,
\end{align}
where the first inequality by the Cauchy-Schwartz's inequality. So,
$|f(t)|\leq\sqrt{t}$. It follows that $\sup_{t\in[0,1]}|f(t)|\leq1$.
Hence, by Theorem 1,
\begin{align}
\limsup\sup_{t\in[0,1]}|\xi_{n}(t)|\leq1,~~~~~~~~P_0^\infty-a.s.
\end{align}
and, setting $t=1$, with the equation (\ref{40}) again,
\begin{align}
\limsup|X_{n}-nv|/\sqrt{2n\log\log n}\leq
\sqrt{\textrm{tr}(\mathfrak{D})},~~~~~~~~P_0^\infty-a.s.
\end{align}

On the other hand, we can put
$f(t)=\frac{t}{\sqrt{d}}\sum_{i=1}^{d}e_{i}$, $t\in [0,1]$. Then,
$f\in K$ and so for $P_0^\infty-a.s.\,\omega^{*}$, there exists a
sequence $n_{k}=n_{k}(\omega^{*})$, such that
\begin{align}
\xi_{n_{k}}(\cdot)(\omega^*)\longrightarrow f(\cdot).
\end{align}
Particularly, $f(1)=\frac{1}{\sqrt{d}}\sum_{i=1}^{d}e_{i}$,
$|\xi_{n_{k}}(1)(\omega^*)|\longrightarrow |f(1)|$. That is to say,
\begin{align}
|X_{n_{k}}-n_{k}v|/\sqrt{2{n_{k}}\log\log
n_{k}}\longrightarrow\sqrt{\textrm{tr}(\mathfrak{D})},~~~~~~~~P_0^\infty-a.s.
\end{align}
This completes the proof of Theorem 2.

\section*{\small Acknowledgements}
{\small The first author wishes to thank Prof. F.Q. Gao, Dr. Zh.H.
Du, and Dr. P. Lv for their kindly help, especially Professor X.
Chen for his valuable insight to the Step II in the proof of Theorem
1. The second author wishes to thank Prof. L.M. Wu of Universit\'e
Blaise Pascal and Wuhan University, for his helpful discussions and
suggestions during writing this paper.}

\end{document}